\newenvironment{proof}{\emph{Proof.}}{\hfill~$\square$ \\[2mm]}
\begin{document}
\begin{frontmatter}

\title{Linearized Controllability Analysis of Semilinear Partial Differential Equations\thanksref{footnoteinfo}} 

\thanks[footnoteinfo]{This work has been supported by the Austrian Science Fund (FWF) under grant number P 29964-N32.}

\author[First]{Bernd Kolar} 
\author[First]{Markus Sch{\"o}berl} 

\address[First]{Institute of Automatic Control and Control Systems Technology, Johannes Kepler University Linz, Altenbergerstrasse 66, 4040 Linz, Austria (e-mail: bernd.kolar@jku.at, markus.schoeberl@jku.at).}

\begin{abstract}                
It is well-known that the controllability of finite-dimensional nonlinear systems can be established by showing the controllability of the linearized system. However, this classical result does not generalize to infinite-dimensional nonlinear systems. In this paper, we restrict ourselves to semilinear infinite-dimensional systems, and show that the exact controllability of the linearized system implies exact controllability of the nonlinear system. The restrictions concerning the nonlinear operator are similar to those that can be found in the literature about the linearized stability analysis of semilinear systems.
\end{abstract}

\begin{keyword}
nonlinear systems, infinite-dimensional systems, linearization, exact controllability
\end{keyword}

\end{frontmatter}

\section{Introduction}

Studying system-theoretic properties like controllability or observability
for nonlinear infinite-dimensional systems is in general a very difficult
task. Roughly speaking, systems theory for partial differential equations
can be divided into formal (algebraic or geometric) methods like e.g.
in \cite{Pommaret:1994} and \cite{Schoberl:2014} that are based
on the structure of the equations, and functional-analytic methods
that are rather based on the solutions. Whereas formal methods have
been proven to be very successful for finite-dimensional nonlinear
systems, in the infinite-dimensional case they suffer from the drawback
that the function spaces for e.g. the state and the input cannot be
properly specified. Thus, formal methods seem to be rather suited
for proving negative results like non-controllability or non-observability,
that can possibly be shown directly from the structure of the equations,
see e.g. \cite{KolarRamsSchoberl:2018} or \cite{KolarSchoberl:2019}.
For proving positive results, in contrast, a functional-analytic approach
seems to be indispensable.

For finite-dimensional nonlinear systems, it is well-known that stability,
controllability, or observability can be established in a straightforward
way by proving the corresponding property for the linearized system,
see e.g. \cite{NijmeijervanderSchaft:1990} or \cite{Khalil:2002}.
Unfortunately, these classical results do not generalize to infinite-dimensional
nonlinear systems. So far, the existing literature deals mainly with
the linearized stability analysis of infinite-dimensional nonlinear
systems, see e.g. \cite{DeschSchappacher:1986}, \cite{Smoller:1994},
\cite{Kato:1995}, \cite{JamalChowMorris:2014}, or \cite{JamalMorris:2018}.
Particularly, it is shown in \cite{Smoller:1994} that for semilinear
systems with a nonlinear operator that is subject to certain restrictions
the stability of the linearized system implies -- like in the finite-dimensional
case -- the (local) stability of the original system. In the present
paper, we pursue a similar approach with respect to the exact controllability
problem for infinite-dimensional semilinear systems with distributed
input. Exact controllability means that the controllability map of
the system is surjective, and the basic idea of our approach is to
apply the local surjectivity theorem to this controllability map in
order to establish a connection between the exact controllability
of the linearized system and the (local) exact controllability of
the nonlinear system. %
{} Even though the controllability problem is quite different from the
stability problem, we need conditions on the nonlinear operator of
the semilinear system that are very similar to those in \cite{Smoller:1994}.
It should also be noted that in contrast to the stability analysis
we are dealing here with non-autonomous systems, and, as mentioned
in \cite{SchmidDashkovskiyJacobLaasri:2019}, there exist only very
few papers on non-autonomous semilinear systems in the context of
control theory.

\section{Preliminaries}

Throughout the paper, we need the concept of a Fr\'{e}chet derivative
of maps between infinite-dimensional spaces.
\begin{defn}
(Fr\'{e}chet Derivative) A map $f:X\rightarrow Y$ from a Banach
space $X$ to a Banach space $Y$ is Fr\'{e}chet differentiable at
$x\in X$, if there exists a bounded linear operator $Df(x):X\rightarrow Y$
such that
\begin{equation}
\lim_{\left\Vert h\right\Vert _{X}\rightarrow0}\frac{\left\Vert f(x+h)-f(x)-Df(x)h\right\Vert _{Y}}{\left\Vert h\right\Vert _{X}}=0\,.\label{eq:Condition_Frechet}
\end{equation}
The map is Fr\'{e}chet differentiable if it is Fr\'{e}chet differentiable
at every $x\in X$, and it is continuously Fr\'{e}chet differentiable
if the Fr\'{e}chet derivative $Df(x)$ depends continuously on $x$.
\end{defn}
For bounded linear operators between Banach spaces, we use the usual
operator norm
\[
\left\Vert T\right\Vert =\sup_{x\in D(T),\,x\neq0}\frac{\left\Vert Tx\right\Vert _{Y}}{\left\Vert x\right\Vert _{X}}\,,
\]
and denote it by $\left\Vert \cdot\right\Vert $ without a subscript.

Our approach to the linearized controllability analysis is based on
the local surjectivity theorem, that can be found e.g. in \cite{AbrahamMarsdenRatiu:1988}.
\begin{thm}
(Local Surjectivity Theorem) Let $X$ and $Y$ be Banach spaces and
$V\subset X$ be open. If the map $f:V\subset X\rightarrow Y$ is
continuously Fr\'{e}chet differentiable and $Df(x_{0})$ is surjective
for some $x_{0}\in V$, then $f$ is locally surjective. That is,
there exist open neighborhoods $V_{1}$ of $x_{0}$ and $W_{1}$ of
$f(x_{0})$ such that $\left.f\right|_{V_{1}}:V_{1}\rightarrow W_{1}$
is surjective.
\end{thm}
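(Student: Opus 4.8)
The plan is to derive the theorem from the open mapping theorem applied to the bounded linear operator $A:=Df(x_0):X\to Y$, combined with a Newton-type successive-approximation scheme (the classical Lyusternik--Graves argument); this route has the advantage of not requiring any complementability assumption on $\ker A$. Since $A$ is surjective, the open mapping theorem yields a constant $\delta>0$ such that for every $v\in Y$ one can choose an element $u=R(v)\in X$ with $Au=v$ and $\|u\|_X\le\delta^{-1}\|v\|_Y$; the (in general nonlinear) selection $R$ plays the role of a bounded right inverse of $A$, and only the relations $AR(v)=v$ and $\|R(v)\|_X\le\delta^{-1}\|v\|_Y$ will be used. Because $Df$ depends continuously on its argument, for any prescribed $\varepsilon>0$ there is an $r>0$ with the closed ball $\overline{B}(x_0,r):=\{x:\|x-x_0\|_X\le r\}$ contained in $V$ and $\|Df(x)-A\|\le\varepsilon$ for all $x\in\overline{B}(x_0,r)$; the mean value inequality then gives the affine estimate $\|f(x')-f(x)-A(x'-x)\|_Y\le\varepsilon\,\|x'-x\|_X$ for all $x,x'\in\overline{B}(x_0,r)$.

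Next I would fix $\varepsilon$ small enough that $q:=\varepsilon/\delta<1$, set $\rho:=(1-q)\,\delta\,r$ and $W_1:=\{y\in Y:\|y-f(x_0)\|_Y<\rho\}$, and, for a given $y\in W_1$, run the iteration that starts from $x_0$ and sets $h_n:=R\bigl(y-f(x_n)\bigr)$, $x_{n+1}:=x_n+h_n$. Since $Ah_n=y-f(x_n)$, the affine estimate gives $\|y-f(x_{n+1})\|_Y\le\varepsilon\|h_n\|_X\le q\,\|y-f(x_n)\|_Y$, whence $\|y-f(x_n)\|_Y\le q^{\,n}\|y-f(x_0)\|_Y$ and $\sum_{n\ge0}\|h_n\|_X\le\delta^{-1}(1-q)^{-1}\|y-f(x_0)\|_Y<r$. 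This single bound shows simultaneously that all iterates remain in $\overline{B}(x_0,r)$ (so the affine estimate is legitimately applicable at every step) and that $(x_n)$ is a Cauchy sequence; denoting its limit by $x$, continuity of $f$ gives $f(x)=\lim_n f(x_n)=y$ and $\|x-x_0\|_X<r$. Taking $V_1:=f^{-1}(W_1)\cap\{x:\|x-x_0\|_X<r\}$, which is open in $X$ and contains $x_0$, this construction produces for each $y\in W_1$ a point $x\in V_1$ with $f(x)=y$, i.e.\ $\left.f\right|_{V_1}:V_1\to W_1$ is surjective.

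I expect the only genuinely delicate point to be the bookkeeping of the radii, namely checking that the iterates never leave the ball $\overline{B}(x_0,r)$ on which the affine estimate is available; this is exactly why $\rho$ has to be taken proportional to $r$ with the contraction factor $1-q$ as proportionality constant. A secondary subtlety is that $R$ is only a bounded right-inverse \emph{selection} --- in general neither linear nor continuous --- so $h_n$ must be re-chosen at each step and no linearity of $R$ may be invoked. If one is additionally willing to assume that $\ker A$ is complemented (which holds automatically, e.g., when $X$ is a Hilbert space or $Y$ is finite dimensional), a more transparent alternative is to restrict $f$ to the affine subspace $x_0+Z$ with $Z$ a closed complement of $\ker A$: then $Df(x_0)|_Z:Z\to Y$ is a bounded linear bijection, hence boundedly invertible, and the ordinary inverse function theorem in Banach spaces applied to this restriction yields the local surjectivity of $f$ directly.
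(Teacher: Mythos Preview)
Your argument is correct: the Lyusternik--Graves iteration you outline is the standard way to obtain local surjectivity without any splitting assumption on $\ker Df(x_0)$, and your bookkeeping of the radii (in particular the choice $\rho=(1-q)\delta r$ and the inductive verification that $x_n\in\overline{B}(x_0,r)$ before each application of the affine estimate) is clean. The one point worth emphasising in a write-up is that the induction must establish $x_{n+1}\in\overline{B}(x_0,r)$ \emph{before} invoking the mean-value estimate between $x_n$ and $x_{n+1}$; your bound $\sum_{k\le n}\|h_k\|_X\le\delta^{-1}(1-q)^{-1}\|y-f(x_0)\|_Y<r$ does exactly this, but the order of the steps should be made explicit.

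As for the comparison: the paper does not prove this theorem at all. It is quoted in the Preliminaries as a known result with a reference to \cite{AbrahamMarsdenRatiu:1988}, and is then used as a black box in Theorem~\ref{thm:ExactControllability}. So there is no ``paper's own proof'' to compare against; your Lyusternik--Graves argument is a perfectly acceptable self-contained substitute for the citation. The alternative you mention at the end (restricting to a complement of the kernel and applying the inverse function theorem) is essentially the route taken in \cite{AbrahamMarsdenRatiu:1988}, but it requires the kernel to split, whereas your primary argument does not---which is a genuine advantage in the Banach-space setting of the paper.
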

We also make frequent use of Gronwall's lemma in the form presented
in \cite{Zeidler:1986}.
\begin{lem}
(Gronwall) Let $f,g:[t_{0},\tau]\rightarrow\mathbb{R}$ be continuous
functions, with $g$ nondecreasing, and which, for fixed $K>0$, satisfy
the inequality
\[
f(t)\leq g(t)+K\intop_{t_{0}}^{t}f(s)ds\,,\quad\forall t\in[t_{0},\tau]\,.
\]
Then
\[
f(t)\leq g(t)e^{K(t-t_{0})}\,,\quad\forall t\in[t_{0},\tau]\,.
\]
\end{lem}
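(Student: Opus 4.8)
The plan is to reduce the integral inequality to a differential inequality for the auxiliary function $R(t):=\int_{t_0}^{t}f(s)\,ds$ and then to solve that inequality by the integrating-factor method. Since $f$ is continuous on $[t_0,\tau]$, the function $R$ is continuously differentiable with $R'(t)=f(t)$ and $R(t_0)=0$, and the hypothesis can be rewritten as $R'(t)\le g(t)+KR(t)$ for all $t\in[t_0,\tau]$, i.e. $R'(t)-KR(t)\le g(t)$.

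Next I would multiply this rearranged inequality by the strictly positive factor $e^{-K(t-t_0)}$, which preserves the inequality and, after recognizing the left-hand side as a derivative, yields $\frac{d}{dt}\!\left(e^{-K(t-t_0)}R(t)\right)\le g(t)e^{-K(t-t_0)}$. Both sides here are continuous functions of $t$, so integrating from $t_0$ to $t$ and using $R(t_0)=0$ gives $e^{-K(t-t_0)}R(t)\le\int_{t_0}^{t}g(s)e^{-K(s-t_0)}\,ds$.

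This is the point where the monotonicity of $g$ is used: for $s\in[t_0,t]$ we have $g(s)\le g(t)$, so the right-hand side is bounded above by $g(t)\int_{t_0}^{t}e^{-K(s-t_0)}\,ds=\frac{g(t)}{K}\bigl(1-e^{-K(t-t_0)}\bigr)$. Multiplying through by $e^{K(t-t_0)}$ then produces $R(t)\le\frac{g(t)}{K}\bigl(e^{K(t-t_0)}-1\bigr)$, and substituting this bound for $R(t)$ back into the original inequality $f(t)\le g(t)+KR(t)$ finally gives $f(t)\le g(t)+g(t)\bigl(e^{K(t-t_0)}-1\bigr)=g(t)e^{K(t-t_0)}$, which is the assertion.

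As for difficulties, the argument is essentially routine; the only point that requires a little care is that $g$ is assumed merely continuous and nondecreasing, not differentiable. This is precisely why I would work with the automatically $C^1$ antiderivative $R$ and keep $g$ under an integral sign until the very last estimate, rather than attempting to differentiate the right-hand side $g(t)+K\int_{t_0}^{t}f(s)\,ds$ directly. One should also check the sign in the closing substitution: it is legitimate because $K>0$, so the upper bound on $R(t)$ feeds back into $f(t)\le g(t)+KR(t)$ in the correct direction.
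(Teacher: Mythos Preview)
Your proof is correct and is the standard integrating-factor argument for this form of Gronwall's inequality. The paper itself does not prove the lemma; it merely quotes it as a preliminary result from \cite{Zeidler:1986}, so there is no authors' proof to compare against.
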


\section{Semilinear Systems}

We consider semilinear systems of the form
\begin{equation}
\dot{x}(t)=Ax(t)+f(x(t))+Bu(t)\,,\quad x(t_{0})=x_{0}\,,\label{eq:Sys_Ax_f}
\end{equation}
where $A$ is the infinitesimal generator of a strongly continuous
semigroup $T(t)$ on a Hilbert space $X$, the linear operator $B:U\rightarrow X$
is bounded, and the nonlinear map $f:X\rightarrow X$ is continuously
Fr\'{e}chet differentiable. %
Furthermore, we assume that $f(0)=0$ and $Df(0)=0$. With these assumptions,
\[
(x_{s},u_{s})=(0,0)
\]
is an equilibrium of the system (\ref{eq:Sys_Ax_f}), and the linearization
about this equilibrium is given by
\begin{equation}
\Delta\dot{x}(t)=A\Delta x(t)+B\Delta u(t)\,,\quad\Delta x(t_{0})=\Delta x_{0}\,,\label{eq:Linearized_system}
\end{equation}
which is just the linear part of the system.
\begin{rem}
It should be noted that the linearization of the system is based on
the G\^{a}teaux derivative, see e.g. \cite{JamalChowMorris:2014}
or \cite{JamalMorris:2018}. Since the operator $A$ is typically
unbounded, the right-hand side of (\ref{eq:Sys_Ax_f}) does not possess
a Fr\'{e}chet derivative with respect to $x$.
\end{rem}
As discussed in \cite{Pazy:1983}, classical solutions of the semilinear
system (\ref{eq:Sys_Ax_f}) satisfy the integral equation
\begin{equation}
x(t)=T(t)x(t_{0})+\intop_{t_{0}}^{t}T(t-s)\left(f(x(s))+Bu(s)\right)ds\,.\label{eq:integral_equation}
\end{equation}
Therefore, continuous solutions of the integral equation (\ref{eq:integral_equation})
are called mild solutions of (\ref{eq:Sys_Ax_f}), see also \cite{CurtainZwart:1995}
for the linear case. For the controllability problem with $t_{0}=0$
and initial condition $x(0)=0$, we have
\begin{equation}
x(t)=\intop_{0}^{t}T(t-s)\left(f(x(s))+Bu(s)\right)ds\,.\label{eq:controllability problem}
\end{equation}

Throughout the paper, we assume that for some $\tau>0$ and all inputs
$u(t)$ in an open neighborhood
\[
\mathcal{U}\subset L_{2}([0,\tau];U)
\]
of $u(t)=0$ the semilinear system (\ref{eq:Sys_Ax_f}) with $x(0)=0$
has a unique mild solution on the interval $[0,\tau]$, i.e., a unique
continuous solution of (\ref{eq:controllability problem}).
\begin{rem}
Proving the existence and uniqueness of solutions would require a
(repeated) application of Banach's fixed-point theorem.
\end{rem}
In the following, we denote the solution for an input $u(t)\in\mathcal{U}$
by
\[
x(t)=S_{t}(u)\,,\quad t\in[0,\tau]\,.
\]
The map
\begin{equation}
S_{\tau}(u):\mathcal{U}\rightarrow X\label{eq:S_tau}
\end{equation}
with $t=\tau$ is the controllability map of the semilinear system
on $[0,\tau]$, and we call the system locally exactly controllable
on $[0,\tau]$ if this map is locally surjective. In other words,
for every final state $x(\tau)\in X$ in an open neighborhood of the
origin there must exist an input trajectory $u(t)$ on the time interval
$[0,\tau]$ such that
\[
x(\tau)=S_{\tau}(u)\,.
\]
The basic idea is now to use the local surjectivity theorem in order
to establish a connection between the local exact controllability
of the semilinear system (\ref{eq:Sys_Ax_f}) and the exact controllability
of the linearized system (\ref{eq:Linearized_system}).%

\begin{thm}
\label{thm:ExactControllability}Assume that the controllability map
(\ref{eq:S_tau}) of the semilinear system (\ref{eq:Sys_Ax_f}) satisfies
the following conditions:
\begin{enumerate}
\item[(A1)] \label{enu:condition_contFrechet}$S_{\tau}(u)$ is continuously
Fr\'{e}chet differentiable with respect to $u$ in an open neighborhood
of $u=0$.
\item[(A2)] \label{enu:condition_Frechet_lin}The Fr\'{e}chet derivative $DS_{\tau}(0)$
at $u=0$ coincides with the controllability map
\[
\intop_{0}^{\tau}T(\tau-s)B\Delta u(s)ds:L_{2}([0,\tau];U)\rightarrow X
\]
of the linearized system (\ref{eq:Linearized_system}) on $[0,\tau]$.
\end{enumerate}
Then exact controllability of the linearized system (\ref{eq:Linearized_system})
on $[0,\tau]$ implies local exact controllability of the semilinear
system (\ref{eq:Sys_Ax_f}) on $[0,\tau]$.
\end{thm}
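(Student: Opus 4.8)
The plan is to apply the Local Surjectivity Theorem directly to the controllability map $S_{\tau}$, using the hypotheses (A1) and (A2) to verify its assumptions. Before doing so, I would record one elementary fact: $S_{\tau}(0)=0$. Indeed, for the input $u(t)\equiv0$ and $x(0)=0$ the integral equation (\ref{eq:controllability problem}) reduces to $x(t)=\intop_{0}^{t}T(t-s)f(x(s))\,ds$, and since $f(0)=0$ the trajectory $x\equiv0$ solves it; by the assumed uniqueness of the mild solution, it is the only solution, so $S_{\tau}(0)=0$.

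Next I would restate the hypothesis in operator-theoretic terms. Exact controllability of the linearized system (\ref{eq:Linearized_system}) on $[0,\tau]$ means precisely that the bounded linear operator
\[
\Delta u\mapsto\intop_{0}^{\tau}T(\tau-s)B\Delta u(s)\,ds:L_{2}([0,\tau];U)\rightarrow X
\]
is surjective. By (A2) this operator coincides with $DS_{\tau}(0)$, hence $DS_{\tau}(0)$ is surjective. Together with (A1), which guarantees that $S_{\tau}$ is continuously Fr\'{e}chet differentiable on some open neighborhood $V\subset L_{2}([0,\tau];U)$ of $u=0$, all hypotheses of the Local Surjectivity Theorem are satisfied, taking the Banach spaces to be $L_{2}([0,\tau];U)$ and $X$, the open set to be $V$, and the base point to be $x_{0}=0$.

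The Local Surjectivity Theorem then provides open neighborhoods $V_{1}$ of $0$ in $L_{2}([0,\tau];U)$ and $W_{1}$ of $S_{\tau}(0)=0$ in $X$ such that $\left.S_{\tau}\right|_{V_{1}}:V_{1}\rightarrow W_{1}$ is surjective. In other words, every final state $x(\tau)\in W_{1}$, an open neighborhood of the origin in $X$, can be written as $x(\tau)=S_{\tau}(u)$ for some admissible input $u\in V_{1}$. This is exactly the statement that the semilinear system (\ref{eq:Sys_Ax_f}) is locally exactly controllable on $[0,\tau]$, which completes the argument.

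Since the substantial analytic content --- that the nonlinear controllability map is continuously Fr\'{e}chet differentiable (A1) and that its derivative at the origin equals the linear controllability map (A2) --- is assumed here rather than established, this particular argument has no real obstacle. The only points that require a little care are the verification that $S_{\tau}(0)=0$ from $f(0)=0$ together with uniqueness, and the observation that the conclusion of the Local Surjectivity Theorem is literally the paper's definition of local exact controllability, so that no further translation is needed.
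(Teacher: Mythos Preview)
Your proof is correct and follows the same approach as the paper, which simply invokes the Local Surjectivity Theorem with (A1) supplying the continuous Fr\'{e}chet differentiability and (A2) identifying $DS_{\tau}(0)$ with the (surjective) linear controllability map. Your argument is in fact slightly more complete than the paper's, since you explicitly verify $S_{\tau}(0)=0$ from $f(0)=0$ and uniqueness of the mild solution, a point the paper leaves implicit but which is needed to ensure the neighborhood $W_{1}$ is centered at the origin.
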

\begin{proof}
The condition (A1) is a prerequisite for the application of the local
surjectivity theorem. With condition (A2) and the local surjectivity
theorem, surjectivity of the controllability map of the linearized
system implies local surjectivity of the controllability map of the
semilinear system. Consequently, exact controllability of the linearized
system implies local exact controllability of the semilinear system.
\end{proof}
The practical use of this theorem is of course limited, since it would
require knowledge of the controllability map (\ref{eq:S_tau}) of
the semilinear system. Thus, in the remainder of the paper, we translate
the conditions (A1) and (A2) of Theorem \ref{thm:ExactControllability}
into (sufficient) conditions on the nonlinear term $f$ of the system
(\ref{eq:Sys_Ax_f}). The main difficulty consists in proving the
continuous Fr\'{e}chet differentiability of $S_{\tau}(u)$.

\section{Fr\'{e}chet Differentiability of the Controllability Map}

In this section, we show that the conditions (A1) and (A2) of Theorem
\ref{thm:ExactControllability} are satisfied for all semilinear systems
(\ref{eq:Sys_Ax_f}) where the nonlinear operator $f$ satisfies the
following additional assumptions.
\begin{enumerate}
\item[(B1)] \label{enu:assumption_square}For every bounded set $\mathcal{B}\subset X$,
there exist positive constants $\alpha,\gamma\in\mathbb{R}$ such
that
\[
\left\Vert f(x_{1})\hspace{-0.5mm}-\hspace{-0.5mm}f(x_{2})\hspace{-0.5mm}-\hspace{-0.5mm}Df(x_{2})(x_{1}\hspace{-0.5mm}-\hspace{-0.5mm}x_{2})\right\Vert _{X}\leq\alpha\left\Vert x_{1}\hspace{-0.5mm}-\hspace{-0.5mm}x_{2}\right\Vert _{X}^{1+\gamma}
\]
for all $x_{1},x_{2}\in\mathcal{B}$.
\item[(B2)] \label{enu:assumption_Frechet}The Fr\'{e}chet derivative $Df$
is locally Lipschitz continuous.
\end{enumerate}
The assumption (B1) with $\gamma=1$ is also used in \cite{Smoller:1994}
for the linearized stability analysis of semilinear autonomous systems.%

We proceed in several steps. First, we show that for all $t\in[0,\tau]$
the map $S_{t}(u):\mathcal{U}\rightarrow X$ is locally Lipschitz
continuous (Lemma \ref{lem:SolutionMapContinuous}). Based on this
result, we prove that $S_{t}(u)$ is also Fr\'{e}chet differentiable
with respect to $u$, and that the Fr\'{e}chet derivative coincides
with the solution operator (controllability map) of the linearized
system (Theorem \ref{thm:FrechetDerivative}). Finally, we prove that
$S_{t}(u)$ is even continuously Fr\'{e}chet differentiable (Theorem
\ref{thm:ContinuousFrechet}).%

\begin{lem}
\label{lem:SolutionMapContinuous}There exists a constant $c$ such
that
\[
\left\Vert S_{t}(u_{1})-S_{t}(u_{2})\right\Vert _{X}\leq c\left\Vert u_{1}-u_{2}\right\Vert _{L_{2}}
\]
for all $t\in[0,\tau]$ and $u_{1},u_{2}\in\mathcal{U}$.
\end{lem}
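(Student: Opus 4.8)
The plan is to work directly with the integral equation (\ref{eq:controllability problem}) defining the mild solutions. Writing $x_1(t)=S_t(u_1)$ and $x_2(t)=S_t(u_2)$ and subtracting, one obtains
\[
x_1(t)-x_2(t)=\intop_{0}^{t}T(t-s)\bigl(f(x_1(s))-f(x_2(s))+B(u_1(s)-u_2(s))\bigr)\,ds\,,
\]
and the estimate will follow by taking norms, bounding the three contributions separately, and closing the resulting integral inequality with Gronwall's lemma.

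The semigroup contributes the finite bound $M:=\sup_{t\in[0,\tau]}\left\Vert T(t)\right\Vert$, which exists since $T(t)$ is strongly continuous. For the nonlinear term I first need that all trajectories in play lie in one common bounded set. Every mild solution is continuous, hence bounded on $[0,\tau]$; and, after possibly replacing $\mathcal{U}$ by a smaller bounded neighbourhood of $u=0$, a Gronwall estimate applied to (\ref{eq:controllability problem}) itself (using $f(0)=0$ together with the local Lipschitz continuity of $f$, which follows from its continuous Fr\'echet differentiability via the mean value inequality, see \cite{AbrahamMarsdenRatiu:1988}) shows that $\mathcal{B}:=\{\,S_s(u):s\in[0,\tau],\ u\in\mathcal{U}\,\}$ is bounded in $X$. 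On $\mathcal{B}$ the map $f$ has a Lipschitz constant $L$, so that $\left\Vert f(x_1(s))-f(x_2(s))\right\Vert_X\leq L\left\Vert x_1(s)-x_2(s)\right\Vert_X$. Finally, the input term is handled by Cauchy--Schwarz, $\intop_{0}^{\tau}\left\Vert u_1(s)-u_2(s)\right\Vert_U\,ds\leq\sqrt{\tau}\,\left\Vert u_1-u_2\right\Vert_{L_2}$, together with the boundedness of $B$.

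Combining these bounds gives, for every $t\in[0,\tau]$,
\[
\left\Vert x_1(t)-x_2(t)\right\Vert_X\leq ML\intop_{0}^{t}\left\Vert x_1(s)-x_2(s)\right\Vert_X\,ds+M\left\Vert B\right\Vert\sqrt{\tau}\,\left\Vert u_1-u_2\right\Vert_{L_2}\,.
\]
Applying Gronwall's lemma with the continuous function $t\mapsto\left\Vert x_1(t)-x_2(t)\right\Vert_X$ in the role of $f$, the constant (hence nondecreasing) function $g(t)\equiv M\left\Vert B\right\Vert\sqrt{\tau}\,\left\Vert u_1-u_2\right\Vert_{L_2}$, and $K=ML$, yields
\[
\left\Vert S_t(u_1)-S_t(u_2)\right\Vert_X\leq M\left\Vert B\right\Vert\sqrt{\tau}\,e^{ML\tau}\,\left\Vert u_1-u_2\right\Vert_{L_2}\,,
\]
so the claim holds with $c=M\left\Vert B\right\Vert\sqrt{\tau}\,e^{ML\tau}$. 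I expect the only real obstacle to be the uniform boundedness of the trajectories -- i.e., producing the set $\mathcal{B}$ on which $f$ is Lipschitz -- since this needs the preliminary Gronwall argument and a careful choice of $\mathcal{U}$; everything else reduces to routine norm estimates.
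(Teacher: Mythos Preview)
Your proof is correct and follows essentially the same route as the paper: subtract the integral equations, bound the semigroup by $M$, the nonlinear term by a Lipschitz constant valid on a bounded set containing all trajectories, the input term by a constant times $\left\Vert u_1-u_2\right\Vert_{L_2}$, and close with Gronwall. The only cosmetic difference is that you bound the input contribution via Cauchy--Schwarz to get the explicit constant $M\left\Vert B\right\Vert\sqrt{\tau}$, whereas the paper simply invokes the boundedness of the linear controllability map $\int_0^t T(t-s)B\,\cdot\,ds$ with an unspecified constant $k$; and you are more explicit than the paper about why the common Lipschitz constant $L$ exists.
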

\begin{proof}
Let $x_{1}(t)=S_{t}(u_{1})$ and $x_{2}(t)=S_{t}(u_{2})$ denote the
solutions for two input trajectories $u_{1},u_{2}\in\mathcal{U}$.
These solutions satisfy the integral equations
\[
x_{1}(t)=\intop_{0}^{t}T(t-s)\left(f(x_{1}(s))+Bu_{1}(s)\right)ds
\]
and
\[
x_{2}(t)=\intop_{0}^{t}T(t-s)\left(f(x_{2}(s))+Bu_{2}(s)\right)ds\,.
\]
For the norm of the difference $x_{1}(t)-x_{2}(t)$, we get the estimate%
\begin{align}
\left\Vert x_{1}(t)-x_{2}(t)\right\Vert _{X} & \leq\left\Vert \intop_{0}^{t}T(t-s)\left(f(x_{1}(s))\hspace{-1mm}-\hspace{-1mm}f(x_{2}(s))\right)\hspace{-1mm}ds\right\Vert _{X}\nonumber \\
 & \phantom{\leq}+\left\Vert \intop_{0}^{t}T(t-s)B(u_{1}(s)-u_{2}(s))ds\right\Vert _{X}\nonumber \\
 & \leq\hspace{-1mm}\intop_{0}^{t}\hspace{-1mm}\left\Vert T(t-s)\right\Vert \left\Vert f(x_{1}(s))\hspace{-1mm}-\hspace{-1mm}f(x_{2}(s))\right\Vert _{X}ds\nonumber \\
 & \phantom{\leq}+k\left\Vert u_{1}-u_{2}\right\Vert _{L_{2}}\nonumber \\
 & \leq ML\intop_{0}^{t}\left\Vert x_{1}(s)-x_{2}(s)\right\Vert _{X}ds\nonumber \\
 & \phantom{\leq}+k\left\Vert u_{1}-u_{2}\right\Vert _{L_{2}}\label{eq:Stu_Continuity_Inequality}
\end{align}
with $M=\sup_{t\in[0,\tau]}\left\Vert T(t)\right\Vert $ and some
$k>0$. Since the continuous Fr\'{e}chet differentiability of $f$
guarantees only local Lipschitz continuity, the Lipschitz constant
$L$ depends of course on
\[
\sup_{u\in\mathcal{U},t\in[0,\tau]}\left\Vert S_{t}(u)\right\Vert _{X}\,,
\]
i.e., on the maximal ``size'' of the solutions with input trajectories
$u\in\mathcal{U}$. We have also used the fact that the term
\[
\intop_{0}^{t}T(t-s)Bu(s)ds
\]
is just the controllability map of the linear part of the system and
therefore bounded with some constant $k$, see e.g. \cite{CurtainZwart:1995}.
Applying Gronwall's lemma %
to (\ref{eq:Stu_Continuity_Inequality}) yields
\[
\left\Vert x_{1}(t)-x_{2}(t)\right\Vert _{X}\leq k\left\Vert u_{1}-u_{2}\right\Vert _{L_{2}}e^{MLt}
\]
and finally
\[
\left\Vert x_{1}(t)-x_{2}(t)\right\Vert _{X}\leq\underbrace{ke^{ML\tau}}_{c}\left\Vert u_{1}-u_{2}\right\Vert _{L_{2}}\,,\quad\forall t\in[0,\tau]\,,
\]
which completes the proof.
\end{proof}
With help of Lemma \ref{lem:SolutionMapContinuous}, we can now show
that the solutions $x(t)=S_{t}(u)$ are Fr\'{e}chet differentiable
with respect to $u$. The proof is divided in fact into two parts.
First, we show that if the Fr\'{e}chet derivative exists it coincides
with the solution operator of the linearized system. Subsequently,
we prove that the solution operator of the linearized system satisfies
the condition (\ref{eq:Condition_Frechet}) for a Fr\'{e}chet derivative.
\begin{thm}
\label{thm:FrechetDerivative}For all $t\in[0,\tau]$, the map
\[
S_{t}(u):L_{2}([0,\tau];U)\rightarrow X
\]
is Fr\'{e}chet differentiable with respect to $u$ at every $\bar{u}\in\mathcal{U}$.
The Fr\'{e}chet derivative $DS_{t}(\bar{u})$ coincides with the
solution operator (controllability map)
\[
L_{t}\Delta u:L_{2}([0,\tau];U)\rightarrow X
\]
of the linearized, time-variant system\footnote{The system (\ref{eq:Linearized-time-variant-system}) is the linearization
of (\ref{eq:Sys_Ax_f}) along the trajectory $x(t)=S_{t}(\bar{u})$.
For $\bar{u}=0$ we have $S_{t}(\bar{u})=0$, and (\ref{eq:Linearized-time-variant-system})
becomes the linear, time-invariant system (\ref{eq:Linearized_system}).
Thus, the condition (A2) of Theorem \ref{thm:ExactControllability}
is contained as a special case.}
\begin{equation}
\Delta\dot{x}(t)=\left(A+Df(S_{t}(\bar{u}))\right)\Delta x(t)+B\Delta u(t)\label{eq:Linearized-time-variant-system}
\end{equation}
with $\Delta x(0)=0$.
\end{thm}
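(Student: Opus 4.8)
The plan is to verify the Fr\'{e}chet condition (\ref{eq:Condition_Frechet}) for the candidate operator $L_{t}$ directly, which in one stroke establishes both existence of the derivative and its identification; this differs from, but subsumes, the two-step structure announced above. Fix $\bar{u}\in\mathcal{U}$, write $\bar{x}(t)=S_{t}(\bar{u})$, and for $\Delta u$ small enough that $\bar{u}+\Delta u\in\mathcal{U}$ write $x(t)=S_{t}(\bar{u}+\Delta u)$; both satisfy (\ref{eq:controllability problem}) with the respective inputs, while $y(t):=L_{t}\Delta u$ is the mild solution of (\ref{eq:Linearized-time-variant-system}), i.e.
\[
y(t)=\intop_{0}^{t}T(t-s)\left(Df(\bar{x}(s))y(s)+B\Delta u(s)\right)ds\,.
\]
As a preliminary, note that $L_{t}$ is a well-defined bounded linear operator on $L_{2}([0,\tau];U)$: since $\bar{x}$ is continuous on $[0,\tau]$ and, by (B2), $Df$ is continuous, the constant $\bar{L}:=\sup_{s\in[0,\tau]}\left\Vert Df(\bar{x}(s))\right\Vert$ is finite; splitting off the linear controllability-map term $\intop_{0}^{t}T(t-s)B\Delta u(s)ds$, bounded by $k\left\Vert \Delta u\right\Vert _{L_{2}}$ as in the proof of Lemma \ref{lem:SolutionMapContinuous}, gives $\left\Vert y(t)\right\Vert _{X}\leq k\left\Vert \Delta u\right\Vert _{L_{2}}+M\bar{L}\intop_{0}^{t}\left\Vert y(s)\right\Vert _{X}ds$, so Gronwall's lemma yields $\left\Vert y(t)\right\Vert _{X}\leq ke^{M\bar{L}\tau}\left\Vert \Delta u\right\Vert _{L_{2}}$, and linearity in $\Delta u$ is immediate.

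The heart of the proof is the estimate for the remainder $w(t):=x(t)-\bar{x}(t)-y(t)$. Subtracting the three integral equations and introducing $z(s):=x(s)-\bar{x}(s)=y(s)+w(s)$, the $B\Delta u$-terms cancel and one is left with
\[
w(t)=\intop_{0}^{t}T(t-s)\left(r(s)+Df(\bar{x}(s))w(s)\right)ds\,,
\]
where $r(s):=f(\bar{x}(s)+z(s))-f(\bar{x}(s))-Df(\bar{x}(s))z(s)$. By the standing hypotheses on $\mathcal{U}$ (and, since $S_{s}(0)=0$, by Lemma \ref{lem:SolutionMapContinuous}) all trajectories $\bar{x}(s)$ and $x(s)$, $s\in[0,\tau]$, remain in a fixed bounded set $\mathcal{B}\subset X$, so (B1) provides $\alpha,\gamma>0$ with $\left\Vert r(s)\right\Vert _{X}\leq\alpha\left\Vert z(s)\right\Vert _{X}^{1+\gamma}$, while Lemma \ref{lem:SolutionMapContinuous} gives $\left\Vert z(s)\right\Vert _{X}\leq c\left\Vert \Delta u\right\Vert _{L_{2}}$. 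Combining these,
\[
\left\Vert w(t)\right\Vert _{X}\leq M\alpha c^{1+\gamma}\tau\left\Vert \Delta u\right\Vert _{L_{2}}^{1+\gamma}+M\bar{L}\intop_{0}^{t}\left\Vert w(s)\right\Vert _{X}ds\,,
\]
and Gronwall's lemma yields $\left\Vert w(t)\right\Vert _{X}\leq C\left\Vert \Delta u\right\Vert _{L_{2}}^{1+\gamma}$ with $C:=M\alpha c^{1+\gamma}\tau e^{M\bar{L}\tau}$, uniformly for $t\in[0,\tau]$. Dividing by $\left\Vert \Delta u\right\Vert _{L_{2}}$ and letting $\left\Vert \Delta u\right\Vert _{L_{2}}\to0$ shows that $L_{t}$ satisfies (\ref{eq:Condition_Frechet}); hence $S_{t}$ is Fr\'{e}chet differentiable at $\bar{u}$ with $DS_{t}(\bar{u})=L_{t}$. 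For $\bar{u}=0$, where $S_{s}(0)=0$ and $Df(0)=0$ collapse (\ref{eq:Linearized-time-variant-system}) into (\ref{eq:Linearized_system}), this recovers condition (A2) of Theorem \ref{thm:ExactControllability}, as noted in the footnote.

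The step I expect to be the main obstacle is the clean use of (B1): one must ensure that a \emph{single} bounded set $\mathcal{B}$ contains all the relevant trajectories, so that one pair $(\alpha,\gamma)$ works uniformly in $\Delta u$ -- this is exactly where the a priori boundedness of $S_{t}(u)$ for $u\in\mathcal{U}$ (and hence the convenience of taking $\mathcal{U}$ bounded in $L_{2}$) is needed -- and, likewise, that $\bar{L}$ is finite. With those in place the rest amounts to two routine applications of Gronwall's lemma; here (B2) enters only through the finiteness of $\bar{L}$, its full strength being required later for the continuity of $u\mapsto DS_{t}(u)$. It is worth stressing that it is precisely the super-linear exponent $1+\gamma>1$ in (B1), carried through the Lipschitz bound of Lemma \ref{lem:SolutionMapContinuous}, that turns what would otherwise be only a G\^{a}teaux derivative into a genuine Fr\'{e}chet derivative.
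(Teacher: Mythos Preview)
Your proof is correct and follows essentially the same route as the paper: define the remainder $w(t)=S_{t}(\bar{u}+\Delta u)-S_{t}(\bar{u})-L_{t}\Delta u$ (the paper calls it $\sigma(t)$), derive the integral identity $w(t)=\int_{0}^{t}T(t-s)\bigl(Df(\bar{x}(s))w(s)+r(s)\bigr)ds$, bound $\left\Vert r(s)\right\Vert _{X}$ via (B1) together with Lemma~\ref{lem:SolutionMapContinuous}, and conclude with Gronwall. The only notable differences are cosmetic: you skip the paper's preliminary heuristic of formally differentiating the integral equation (which is only motivational, since the direct verification of (\ref{eq:Condition_Frechet}) already pins down the derivative), and you add the explicit check that $L_{t}$ is a bounded linear operator, which the paper leaves implicit.
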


\begin{proof}
If $S_{t}(u)$ is Fr\'{e}chet differentiable at $\bar{u}\in\mathcal{U}$
for all $t\in[0,\tau]$, then we can differentiate the integral equation
(\ref{eq:controllability problem}) and observe that the Fr\'{e}chet
derivative $DS_{t}(\bar{u})$ must satisfy the integral equation\footnote{It should be noted that the integrals in this equation are in general
not Lebesgue integrals but Pettis integrals, see also \cite{CurtainZwart:1995}.}
\begin{multline*}
DS_{t}(\bar{u})=\intop_{0}^{t}T(t-s)Df(S_{s}(\bar{u}))DS_{s}(\bar{u})ds\\
+\intop_{0}^{t}T(t-s)Bds\,.
\end{multline*}
The solution of the linearized system (\ref{eq:Linearized-time-variant-system})
meets
\begin{multline}
L_{t}\Delta u=\intop_{0}^{t}T(t-s)Df(S_{s}(\bar{u}))L_{s}\Delta uds\\
+\intop_{0}^{t}T(t-s)B\Delta u(s)ds\,,\label{eq:IntegralEq_LinearizedSys}
\end{multline}
and because of $D(L_{t}\Delta u)=L_{t}$ the solution operator $L_{t}$
satisfies exactly the same integral equation
\begin{multline}
L_{t}=\intop_{0}^{t}T(t-s)Df(S_{s}(\bar{u}))L_{s}ds+\intop_{0}^{t}T(t-s)Bds\,.\label{eq:IntegralEq_SolutionOp_LinearizedSys}
\end{multline}
Thus, if the Fr\'{e}chet derivative $DS_{t}(\bar{u})$ exists, it
coincides with the operator $L_{t}$.\footnote{Provided that the integral equations have a unique solution, but it
is well-known that the Fr\'{e}chet derivative is unique if it exists.} Consequently, we must prove that $L_{t}$ satisfies the condition
\begin{equation}
\lim_{\left\Vert \Delta u\right\Vert _{L_{2}}\rightarrow0}\frac{\left\Vert S_{t}(\bar{u}+\Delta u)-S_{t}(\bar{u})-L_{t}\Delta u\right\Vert _{X}}{\left\Vert \Delta u\right\Vert _{L_{2}}}=0\label{eq:Condition_Frechet_Lt}
\end{equation}
for a Fr\'{e}chet derivative of $S_{t}(u)$ at $\bar{u}$, cf. (\ref{eq:Condition_Frechet}).
For this purpose, we introduce the abbreviation
\[
\sigma(t)=S_{t}(\bar{u}+\Delta u)-S_{t}(\bar{u})-L_{t}\Delta u
\]
for the sum in the numerator of (\ref{eq:Condition_Frechet_Lt}).
Substituting
\begin{align*}
S_{t}(\bar{u}+\Delta u) & =\intop_{0}^{t}T(t-s)f(S_{s}(\bar{u}+\Delta u))ds\\
 & \phantom{=}+\intop_{0}^{t}T(t-s)B(\bar{u}(s)+\Delta u(s))ds\\
S_{t}(\bar{u}) & =\intop_{0}^{t}T(t-s)f(S_{s}(\bar{u}))ds\\
 & \phantom{=}+\intop_{0}^{t}T(t-s)B\bar{u}(s)ds\\
L_{t}\Delta u & =\intop_{0}^{t}T(t-s)Df(S_{s}(\bar{u}))L_{s}\Delta uds\\
 & \phantom{=}+\intop_{0}^{t}T(t-s)B\Delta u(s)ds
\end{align*}
according to the integral equations (\ref{eq:controllability problem})
and (\ref{eq:IntegralEq_LinearizedSys}) yields
\begin{multline}
\sigma(t)=\intop_{0}^{t}T(t-s)\left(f(S_{s}(\bar{u}+\Delta u))-f(S_{s}(\bar{u}))\right.\\
\left.-Df(S_{s}(\bar{u}))L_{s}\Delta u\right)ds\,.\label{eq:integral_eq_sigma}
\end{multline}
Now we write $f(S_{s}(\bar{u}+\Delta u))$ as
\begin{multline}
f(S_{s}(\bar{u}+\Delta u))=\\
f(S_{s}(\bar{u}))+Df(S_{s}(\bar{u}))\left(S_{s}(\bar{u}+\Delta u)-S_{s}(\bar{u})\right)+r(s)\label{eq:Expansion_f}
\end{multline}
with some rest $r(s)$,%
{} and substituting
\begin{multline*}
f(S_{s}(\bar{u}+\Delta u))-f(S_{s}(\bar{u}))=\\
Df(S_{s}(\bar{u}))\left(S_{s}(\bar{u}+\Delta u)-S_{s}(\bar{u})\right)+r(s)
\end{multline*}
into (\ref{eq:integral_eq_sigma}) yields
\[
\sigma(t)=\intop_{0}^{t}T(t-s)Df(S_{s}(\bar{u}))\sigma(s)ds+\intop_{0}^{t}T(t-s)r(s)ds\,.
\]
For the norm of $\sigma(t)$ we get the estimate
\begin{align*}
\left\Vert \sigma(t)\right\Vert _{X} & \leq\intop_{0}^{t}\left\Vert T(t-s)\right\Vert \left\Vert Df(S_{s}(\bar{u}))\right\Vert \left\Vert \sigma(s)\right\Vert _{X}ds+\\
 & \phantom{\leq}+\intop_{0}^{t}\left\Vert T(t-s)\right\Vert \left\Vert r(s)\right\Vert _{X}ds\\
 & \leq M\bar{c}\intop_{0}^{t}\left\Vert \sigma(s)\right\Vert _{X}ds+M\intop_{0}^{t}\left\Vert r(s)\right\Vert _{X}ds
\end{align*}
with
\[
\bar{c}=\sup_{t\in[0,\tau]}\left\Vert Df(S_{t}(\bar{u}))\right\Vert \,,
\]
and applying Gronwall's lemma yields
\begin{equation}
\left\Vert \sigma(t)\right\Vert _{X}\leq\left(M\intop_{0}^{t}\left\Vert r(s)\right\Vert _{X}ds\right)e^{M\bar{c}t}\,.\label{eq:Gronwall_sigma}
\end{equation}
Now we make use of the additional assumption (B1) to obtain an inequality
for $\left\Vert r(s)\right\Vert _{X}$. Applying (B1) to the right-hand
side of
\begin{multline*}
r(s)=f(S_{s}(\bar{u}+\Delta u))-f(S_{s}(\bar{u}))\\
-Df(S_{s}(\bar{u}))\left(S_{s}(\bar{u}+\Delta u)-S_{s}(\bar{u})\right)
\end{multline*}
yields
\[
\left\Vert r(s)\right\Vert _{X}\leq\alpha\left\Vert S_{s}(\bar{u}+\Delta u)-S_{s}(\bar{u})\right\Vert _{X}^{1+\gamma}\,,
\]
and with Lemma \ref{lem:SolutionMapContinuous} we get
\[
\left\Vert r(s)\right\Vert _{X}\leq\alpha c^{1+\gamma}\left\Vert \Delta u\right\Vert _{L_{2}}^{1+\gamma}\,,\quad\forall s\in[0,\tau]\,.
\]
Substituting this inequality into (\ref{eq:Gronwall_sigma}) results
in
\[
\left\Vert \sigma(t)\right\Vert _{X}\leq M\alpha c^{1+\gamma}te^{M\bar{c}t}\left\Vert \Delta u\right\Vert _{L_{2}}^{1+\gamma}\,,\quad t\in[0,\tau]\,,
\]
and because of
\[
\lim_{\left\Vert \Delta u\right\Vert _{L_{2}}\rightarrow0}\frac{\left\Vert \sigma(t)\right\Vert _{X}}{\left\Vert \Delta u\right\Vert _{L_{2}}}\leq\lim_{\left\Vert \Delta u\right\Vert _{L_{2}}\rightarrow0}M\alpha c^{1+\gamma}te^{M\bar{c}t}\left\Vert \Delta u\right\Vert _{L_{2}}^{\gamma}=0
\]
the condition (\ref{eq:Condition_Frechet_Lt}) for a Fr\'{e}chet
derivative is indeed satisfied.
\end{proof}
However, for the application of the local surjectivity theorem, it
is not enough to prove the Fr\'{e}chet differentiability. We have
to show that $S_{\tau}(u)$ is continuously Fr\'{e}chet differentiable,
i.e., that the Fr\'{e}chet derivative $DS_{\tau}(\bar{u})$ depends
continuously on the point $\bar{u}\in\mathcal{U}$.%

\begin{thm}
\label{thm:ContinuousFrechet}For all $t\in[0,\tau]$, the map
\[
S_{t}(u):L_{2}([0,\tau];U)\rightarrow X
\]
is continuously Fr\'{e}chet differentiable on $\mathcal{U}$.
\end{thm}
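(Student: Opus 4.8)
The plan is to build on Theorem~\ref{thm:FrechetDerivative}, which already identifies the Fr\'echet derivative $DS_t(\bar u)$ with the solution operator $L_t$ of the linearized time-variant system (\ref{eq:Linearized-time-variant-system}) along the trajectory $S_t(\bar u)$. Continuous Fr\'echet differentiability of $S_t$ therefore reduces to showing that the map $\bar u\mapsto L_t$ is continuous from $\mathcal{U}$ into the Banach space of bounded operators $L_2([0,\tau];U)\to X$ equipped with the operator norm. In fact I would prove the stronger statement that this map is locally Lipschitz continuous, i.e.\ the analogue of Lemma~\ref{lem:SolutionMapContinuous} at the level of derivatives. Since $\mathcal{U}$ is an open neighborhood of $u=0$, it may (after shrinking) be assumed bounded in $L_2$; then Lemma~\ref{lem:SolutionMapContinuous} confines all trajectories $\{S_s(u):s\in[0,\tau],\,u\in\mathcal{U}\}$ to a fixed bounded set $\mathcal{B}\subset X$.

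For $\bar u_1,\bar u_2\in\mathcal{U}$ let $L_t^1,L_t^2$ denote the corresponding solution operators. By (\ref{eq:IntegralEq_SolutionOp_LinearizedSys}) they satisfy
\[
L_t^i=\intop_0^t T(t-s)Df(S_s(\bar u_i))L_s^i\,ds+\intop_0^t T(t-s)B\,ds\,,\quad i=1,2\,.
\]
Subtracting these equations and inserting $\pm\,Df(S_s(\bar u_1))L_s^2$ gives
\begin{multline*}
L_t^1-L_t^2=\intop_0^t T(t-s)Df(S_s(\bar u_1))\left(L_s^1-L_s^2\right)ds\\
+\intop_0^t T(t-s)\left(Df(S_s(\bar u_1))-Df(S_s(\bar u_2))\right)L_s^2\,ds\,,
\end{multline*}
and taking operator norms yields
\begin{multline*}
\left\Vert L_t^1-L_t^2\right\Vert\leq M\bar c\intop_0^t\left\Vert L_s^1-L_s^2\right\Vert ds\\
+M\intop_0^t\left\Vert Df(S_s(\bar u_1))-Df(S_s(\bar u_2))\right\Vert\left\Vert L_s^2\right\Vert ds\,,
\end{multline*}
where $M=\sup_{t\in[0,\tau]}\left\Vert T(t)\right\Vert$ and $\bar c=\sup_{x\in\mathcal{B}}\left\Vert Df(x)\right\Vert$, the latter being finite because $\mathcal{B}$ is bounded and, by (B2), $Df$ is locally Lipschitz and hence locally bounded.

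It then remains to control the two ingredients uniformly over $\mathcal{U}$. By assumption (B2) there is a Lipschitz constant $L'$ of $Df$ on $\mathcal{B}$, so that with Lemma~\ref{lem:SolutionMapContinuous}
\[
\left\Vert Df(S_s(\bar u_1))-Df(S_s(\bar u_2))\right\Vert\leq L'\left\Vert S_s(\bar u_1)-S_s(\bar u_2)\right\Vert_X\leq L'c\left\Vert\bar u_1-\bar u_2\right\Vert_{L_2}\,.
\]
Furthermore, applying the operator norm and Gronwall's lemma directly to (\ref{eq:IntegralEq_SolutionOp_LinearizedSys}) gives $\left\Vert L_s^2\right\Vert\leq M\left\Vert B\right\Vert\tau\,e^{M\bar c\tau}=:\bar N$, a bound independent of $s\in[0,\tau]$ and of $\bar u_2\in\mathcal{U}$. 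Substituting both estimates,
\[
\left\Vert L_t^1-L_t^2\right\Vert\leq M\bar c\intop_0^t\left\Vert L_s^1-L_s^2\right\Vert ds+M\bar N L'c\,\tau\left\Vert\bar u_1-\bar u_2\right\Vert_{L_2}\,,
\]
and a final application of Gronwall's lemma yields
\[
\left\Vert L_t^1-L_t^2\right\Vert\leq M\bar N L'c\,\tau\,e^{M\bar c\tau}\left\Vert\bar u_1-\bar u_2\right\Vert_{L_2}\,,\quad t\in[0,\tau]\,.
\]
Hence $\bar u\mapsto DS_t(\bar u)=L_t$ is locally Lipschitz, in particular continuous, in operator norm, so $S_t$ is continuously Fr\'echet differentiable on $\mathcal{U}$.

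The two Gronwall arguments and the operator-norm estimates are routine; the delicate point --- exactly as in Lemma~\ref{lem:SolutionMapContinuous} and Theorem~\ref{thm:FrechetDerivative} --- is to secure that all the constants entering the estimates ($M\bar c$ in the Gronwall exponent, the bound $\bar N$ on $\left\Vert L_s^2\right\Vert$, and the Lipschitz constant $L'$ of $Df$) can be chosen uniformly over the neighborhood $\mathcal{U}$. This is what forces us to first shrink $\mathcal{U}$ to a bounded set, so that Lemma~\ref{lem:SolutionMapContinuous} pins all the trajectories into a single bounded subset $\mathcal{B}$ of $X$ on which (B2) supplies one Lipschitz constant and one bound for $\left\Vert Df\right\Vert$. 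Note that assumption (B1) does not reappear here; it enters only through Theorem~\ref{thm:FrechetDerivative}, which is used to identify $DS_t(\bar u)$ with $L_t$ in the first place.
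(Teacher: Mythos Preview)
Your proof is correct and follows essentially the same route as the paper: subtract the two integral equations for $L_t^1$ and $L_t^2$, add and subtract $Df(S_s(\bar u_1))L_s^2$, estimate via assumption~(B2) combined with Lemma~\ref{lem:SolutionMapContinuous}, and finish with Gronwall. The only noteworthy difference is that you derive the uniform bound on $\left\Vert L_s^2\right\Vert$ explicitly via a separate Gronwall step, whereas the paper simply posits $c_2=\sup_{u\in\mathcal{U},\,t\in[0,\tau]}\left\Vert DS_t(u)\right\Vert$ without further justification; your version is cleaner in that respect.
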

\begin{proof}
We have to show that the map $u\rightarrow DS_{t}(u)$ is continuous.
If $DS_{t}(u_{1})$ and $DS_{t}(u_{2})$ are the Fr\'{e}chet derivatives
of $S_{t}(u)$ at $u_{1},u_{2}\in\mathcal{U}$, then they satisfy
the integral equations
\[
DS_{t}(u_{1})=\intop_{0}^{t}T(t-s)\left(Df(S_{s}(u_{1}))DS_{s}(u_{1})+B\right)ds
\]
and
\[
DS_{t}(u_{2})=\intop_{0}^{t}T(t-s)\left(Df(S_{s}(u_{2}))DS_{s}(u_{2})+B\right)ds\,.
\]
The difference of these equations can be written as%
\[
\begin{array}{l}
DS_{t}(u_{1})-DS_{t}(u_{2})=\\
\intop_{0}^{t}\hspace{-0.5mm}\hspace{-0.5mm}T(t\hspace{-0.5mm}-\hspace{-0.5mm}s)\left(Df(S_{s}(u_{1}))DS_{s}(u_{1})\right.\hspace{-0.5mm}\hspace{-0.5mm}-\hspace{-0.5mm}\hspace{-0.5mm}\left.Df(S_{s}(u_{2}))DS_{s}(u_{2})\right)ds\\
=\intop_{0}^{t}T(t-s)\left(Df(S_{s}(u_{1}))\left(DS_{s}(u_{1})\right.\right.\\
\phantom{=}-\hspace{-0.5mm}\left.DS_{s}(u_{2})\right)\hspace{-0.5mm}+\hspace{-0.5mm}\left(Df(S_{s}(u_{1}))\right.\hspace{-0.5mm}\hspace{-0.5mm}-\hspace{-0.5mm}\hspace{-0.5mm}\left.\left.Df(S_{s}(u_{2}))\right)DS_{s}(u_{2})\right)ds\,.
\end{array}
\]
For the norm of the difference, we get the estimate
\begin{equation}
\begin{array}{l}
\left\Vert DS_{t}(u_{1})-DS_{t}(u_{2})\right\Vert \leq\\
\intop_{0}^{t}\left\Vert T(t-s)\right\Vert \left\Vert Df(S_{s}(u_{1}))\right\Vert \left\Vert DS_{s}(u_{1})-DS_{s}(u_{2})\right\Vert ds\\
+\intop_{0}^{t}\hspace{-0.5mm}\left\Vert T(t-s)\right\Vert \hspace{-0.5mm}\left\Vert Df(S_{s}(u_{1}))\hspace{-0.5mm}-\hspace{-0.5mm}Df(S_{s}(u_{2}))\right\Vert \hspace{-0.5mm}\left\Vert DS_{s}(u_{2})\right\Vert ds\\
\leq Mc_{1}\intop_{0}^{t}\left\Vert DS_{s}(u_{1})-DS_{s}(u_{2})\right\Vert ds\\
\phantom{\leq}+Mc_{2}\intop_{0}^{t}\left\Vert Df(S_{s}(u_{1}))-Df(S_{s}(u_{2}))\right\Vert ds
\end{array}\label{eq:S_continuous_frechet_inequality}
\end{equation}
with
\[
c_{1}=\underset{u\in\mathcal{U},t\in[0,\tau]}{\sup}\left\Vert Df(S_{t}(u))\right\Vert 
\]
and
\[
c_{2}=\underset{u\in\mathcal{U},t\in[0,\tau]}{\sup}\left\Vert DS_{t}(u)\right\Vert \,.
\]
With the Lipschitz continuity
\[
\left\Vert Df(S_{s}(u_{1}))-Df(S_{s}(u_{2}))\right\Vert \leq c_{3}\left\Vert S_{s}(u_{1})-S_{s}(u_{2})\right\Vert _{X}
\]
of $Df$ according to assumption (B2) and Lemma \ref{lem:SolutionMapContinuous},
we also get
\[
\left\Vert Df(S_{s}(u_{1}))-Df(S_{s}(u_{2}))\right\Vert \leq\underbrace{c_{3}c}_{c_{4}}\left\Vert u_{1}-u_{2}\right\Vert _{L_{2}}
\]
$\forall s\in[0,t]$. Thus, (\ref{eq:S_continuous_frechet_inequality})
can be simplified to
\begin{multline*}
\left\Vert DS_{t}(u_{1})-DS_{t}(u_{2})\right\Vert \leq\\
Mc_{1}\intop_{0}^{t}\left\Vert DS_{s}(u_{1})-DS_{s}(u_{2})\right\Vert ds+Mc_{2}c_{4}t\left\Vert u_{1}-u_{2}\right\Vert _{L_{2}}\,.
\end{multline*}
Applying Gronwall's lemma yields
\[
\left\Vert DS_{t}(u_{1})-DS_{t}(u_{2})\right\Vert \leq Mc_{2}c_{4}te^{Mc_{1}t}\left\Vert u_{1}-u_{2}\right\Vert _{L_{2}}\,,
\]
which shows that the map $u\rightarrow DS_{t}(u)$ is continuous for
all $t\in[0,\tau]$.
\end{proof}
With Theorem \ref{thm:FrechetDerivative} and Theorem \ref{thm:ContinuousFrechet},
we can finally state our main result.
\begin{thm}
Consider a semilinear system (\ref{eq:Sys_Ax_f}) with a nonlinear
term $f$ that meets the conditions (B1) and (B2). If the linearized
system (\ref{eq:Linearized_system}) is exactly controllable on $[0,\tau]$,
then the original system is locally exactly controllable on $[0,\tau]$.
\end{thm}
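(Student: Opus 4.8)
The plan is to observe that the final theorem is essentially a corollary: the conditions (B1) and (B2) on $f$ have been tailored precisely so that the results of the previous section verify the hypotheses (A1) and (A2) of Theorem~\ref{thm:ExactControllability}, and that theorem then delivers the conclusion. So the proof amounts to assembling the pieces.

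First I would record that, under (B1) and (B2), Theorem~\ref{thm:ContinuousFrechet} guarantees that the controllability map $S_\tau(u)$ is continuously Fr\'echet differentiable on the open neighborhood $\mathcal{U}$ of $u=0$. This is exactly condition (A1) of Theorem~\ref{thm:ExactControllability}.

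Next I would identify the Fr\'echet derivative at $u=0$. By Theorem~\ref{thm:FrechetDerivative}, $DS_\tau(\bar u)$ coincides with the solution operator $L_\tau$ of the linearization (\ref{eq:Linearized-time-variant-system}) along the trajectory $S_t(\bar u)$. Taking $\bar u=0$, the standing assumptions $f(0)=0$ and $Df(0)=0$ come into play: since $0\in\mathcal{U}$ and the constant function $x\equiv 0$ solves the integral equation (\ref{eq:controllability problem}) for $u\equiv 0$ (because $f(0)=0$), uniqueness of mild solutions gives $S_t(0)=0$ for all $t\in[0,\tau]$; hence $Df(S_t(0))=Df(0)=0$ and (\ref{eq:Linearized-time-variant-system}) collapses to the linear time-invariant system (\ref{eq:Linearized_system}). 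Therefore $DS_\tau(0)=\intop_0^\tau T(\tau-s)B\Delta u(s)\,ds$ is the controllability map of (\ref{eq:Linearized_system}), which is exactly condition (A2) --- as already anticipated in the footnote to Theorem~\ref{thm:FrechetDerivative}.

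Finally, with (A1) and (A2) verified, I would invoke Theorem~\ref{thm:ExactControllability}: exact controllability of (\ref{eq:Linearized_system}) on $[0,\tau]$ is by definition surjectivity of its controllability map, which equals $DS_\tau(0)$, so the local surjectivity theorem applied to $S_\tau$ at $u=0$ yields that $S_\tau$ is locally surjective, i.e.\ the semilinear system is locally exactly controllable on $[0,\tau]$. I do not expect a genuine obstacle at this point: all the analytic difficulty --- the Gronwall estimates underlying Lemma~\ref{lem:SolutionMapContinuous}, Theorem~\ref{thm:FrechetDerivative} and Theorem~\ref{thm:ContinuousFrechet} --- has already been dispatched, and the only step demanding a line of care is the identification $S_t(0)=0$ via uniqueness of mild solutions, which is what makes the time-variant linearization reduce to (\ref{eq:Linearized_system}) at the equilibrium input.
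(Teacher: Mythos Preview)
Your proposal is correct and follows exactly the paper's approach: verify (A1) via Theorem~\ref{thm:ContinuousFrechet}, verify (A2) via Theorem~\ref{thm:FrechetDerivative} at $\bar u=0$, and then invoke Theorem~\ref{thm:ExactControllability}. Your additional justification that $S_t(0)=0$ by uniqueness of mild solutions (so that the time-variant linearization collapses to (\ref{eq:Linearized_system})) makes explicit what the paper only records in the footnote to Theorem~\ref{thm:FrechetDerivative}.
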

\begin{proof}
We only have to show that the conditions (A1) and (A2) of Theorem
\ref{thm:ExactControllability} are satisfied. Condition (A1) follows
from Theorem \ref{thm:ContinuousFrechet}, and condition (A2) from
Theorem \ref{thm:FrechetDerivative} with $\bar{u}=0$.
\end{proof}

\section{Conclusion}

We have shown for a class of semilinear infinite-di\-men\-sion\-al
systems that exact controllability of the linearized system implies
local exact controllability of the original system. The assumptions
on the nonlinear operator are similar to those used in \cite{Smoller:1994}
for the linearized stability analysis, i.e., Lyapunov's indirect method.
Future research will deal with the question whether these assumptions
can be relaxed. A further interesting question is whether the approach
with the local surjectivity theorem can also be applied to the approximate
controllability problem. Such an extension is of course not at all
straightforward. Since the infinite-dimensional spaces in the local
surjectivity theorem are Banach spaces, i.e., complete, the intuitive
idea of simply using the reachable subspace of the linearized system
as target space is not directly applicable.

\begin{ack}
The first author would like to thank Hans Zwart for interesting discussions about semilinear systems during a research visit at the University of Twente in March 2019.
\end{ack}

\bibliography{/home/bernd/Dokumente/Publikationen/Bibtex_Literaturdatenbank/Bibliography_Bernd}             
                                                   







\end{document}